\theoremstyle{plain}
\newtheorem{proposition}{Proposition}
\theoremstyle{definition}
\begin{document}

\title{Exact and approximate solutions to the Helmholtz, Schrödinger and wave equation in $\mathbf{R}^3$ with radial data}

\author{Adrian Kirkeby\footnote{Department of Mathematics, University of Oslo. E-mail: adriankir@math.uio.no}}

\vspace{2mm}

\date{}
\maketitle
\begin{abstract}
We derive simple-to-evaluate, closed-form solutions to the inhomogeneous Helmholtz equation, $\Delta u + k^2 u = \chi_{B_{x_0,r}} $, the Schrödinger equation, $i\hbar \partial_t u + \frac{\hbar^2}{2m}\Delta u = 0$ with initial data ${u(x,0) = \chi_{B_{x_0,r}} }$, and the Cauchy problem for the linear wave equation, ${\partial_t^2 u - c^2 \Delta u = 0 }$ with initial data $\left(u(x,0),\partial_t u(x,0)\right) = \left(\chi_{B_{x_0,r}},\chi_{B_{x_0,r}} \right). $ The function $\chi_{B_{x_0,r}}$ is the characteristic function on the ball $B_{x_0,r} = \{x \in \mathbf{R}^3 : |x_0 -x| \leq r \} $. Furthermore, we use these solutions to construct explicit approximate solutions when the data are radial functions on $B_{x_0,r}$, and give various error estimates on these approximations. 

\end{abstract}

\section{Introduction}

The Helmholtz, Schrödinger and wave equation are well known, fundamental partial differential equations. The Helmholtz equation models the propagation of monochromatic waves, i.e., waves with a fixed temporal frequency, and can be applied to the study of acoustic and electromagnetic wave propagation. The Cauchy problem for the wave equation model the time-dependent propagation of waves due to initial disturbances. The Schrödinger equation governs the probabilistic evolution of particles in quantum mechanics. These equations have been thoroughly analyzed many times; see for example \cite{kirsch2016mathematical,coltonkress,eskin2011lectures} on the Helmholtz equation, \cite{evans2010partial,craig2018course,eskin2011lectures,rauch2012hyperbolic} on the wave equation and \cite{teschl2009mathematical,susskind2014quantum,eskin2011lectures,craig2018course,dennis2015princeton} on the Schrödinger equation. \\
Closed form solutions to wave equations are useful for multiple reasons, for example in resolution and uncertainty analysis in scattering problems \cite{de2016limits,griesmaier2017uncertainty}, synthetic data generation in inverse problems \cite{kirkeby2020stability}, regularity estimates \cite{eskin2011lectures,coltonkress}, perturbation methods for non-linear problems and qualitative analysis of wave fields. Due to their oscillating nature, wave equations are demanding to deal with computationally, especially for high-frequency waves and large domains, see for example \cite{runborg2007mathematical,bao2004numerical,babuska1997pollution,jin2011mathematical} and references therein. As a consequence, closed form solutions are valuable for convergence testing and analysis of numerical methods.
\newline 

In this paper we use a method that relies on the spatial symmetry of fundamental solutions to construct closed form solutions to these equations in $\mathbf{R}^3$, when the data, i.e., the initial conditions or the source term, is a characteristic function on a ball with arbitrary location and radius. The main results are found in Propositions \ref{helm}-\ref{wave} in Section \ref{2}. In Section \ref{approxx} we show how these solutions can be used to construct approximate solutions when the data is a function with radial symmetry on such balls. Since all equations are linear, the results imply the construction of solutions to equations when the data is any finite sum of such characteristic functions. Although the literature on these equations is vast, we believe the results to be novel. \\

The idea behind this paper originated while trying to generate non-trivial, high-frequency solutions for an inverse problem for the Helmholtz equation in \cite{kirkeby2020stability}.  \\
\newline 

\section{Results}
\label{2}
This section contains solutions to equations followed by their derivations. In the first subsection on the Helmholtz equation, we show in detail the method used in all computations. 
\subsection{Helmholtz equation}
 
We consider the inhomogeneous Helmholtz equation,  \newline
\begin{equation}
\left\{\begin{array}{rcl}
(\Delta + k^2)u_k &=& \chi_{B_{x_0,r}},   \quad x \in \mathbf{R}^3, \\   
 \lim\limits_{\substack{|x| \to \infty}} |x|(\partial_{|x|} -i k )u_k &=& 0,  \quad  \text{uniformly for } x/|x| \in S^{2}.
\end{array}\right.
\label{fsp}
\end{equation}
Here, $k = \frac{\omega}{c} > 0$ is the wavenumber\footnote{For the case of complex valued $k$, see below.}, where $c$ is the wave speed of the medium and $\omega$ is the temporal frequency of the wave.  $S^2$ is the unit sphere in $\mathbf{R}^3$, and the Sommerfeld radiation condition guarantees a unique, radiating solution $u_k$ (cf. \cite{eskin2011lectures}, p. 91). 
The source term is the characteristic functions $\chi_{B_{x_0,r}}(x)$, defined as
\begin{equation}
    \chi_{B_{x_0,r}}(x) = \begin{cases}
        &1, \text{ for } x \in B_{x_0,r}, \\
        &0, \text{ for } x \in \mathbf{R}^3\setminus B_{x_0,r},
    \end{cases}
\end{equation}
where $B_{x_0,r} = \{ x \in \mathbf{R}^3 : |x_0 - x| \leq r \}$ is a closed ball of radius $r$ centered at $x_0$.

We now present the first result.
\begin{proposition}
\label{helm}
Let $d = |x - x_0|$. The solution to \eqref{fsp} is given by 
\begin{equation}
    u_k(x)= \begin{cases} &\dfrac{(i -kr)\mathrm{e}^{ik(d-r)} - (i +kr)\mathrm{e}^{ik(d+r)}}{2dk^3}, \quad  \text{for } x \in \mathbf{R}^3 \setminus B_{x_0,r}, \vspace{5mm} \\ 

                    &  \dfrac{ (i +kr)(\mathrm{e}^{ik(r-d)}-\mathrm{e}^{ik(r+d)})  - 2dk }{2dk^3}, \quad  \text{for } x \in B_{x_0,r}. 
            \end{cases}
            \label{helmsol}
\end{equation}

\end{proposition}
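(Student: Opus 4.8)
The plan is to write the solution as a convolution of the source with the outgoing fundamental solution of the Helmholtz operator and then to evaluate the resulting integral explicitly, exploiting the rotational symmetry shared by the kernel and the ball. The fundamental solution $\Phi_k(x) = \frac{e^{ik|x|}}{4\pi|x|}$ satisfies $(\Delta + k^2)\Phi_k = -\delta_0$ together with the Sommerfeld radiation condition, so the unique radiating solution to \eqref{fsp} is $u_k = -\Phi_k * \chi_{B_{x_0,r}}$; concretely,
\begin{equation}
u_k(x) = -\frac{1}{4\pi}\int_{B_{x_0,r}} \frac{e^{ik|x-y|}}{|x-y|}\,dy.
\end{equation}
I would keep careful track of this sign and normalisation, since it is exactly what fixes the overall constant in \eqref{helmsol}. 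The whole problem is thereby reduced to computing a single scalar integral.

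Next I would use symmetry to simplify. Translating, I may take $x_0 = 0$, and since the kernel depends only on $|x-y|$ while $B_{0,r}$ is rotation invariant, $u_k(x)$ depends only on $d = |x|$; I place $x$ on a coordinate axis. Writing $y$ in spherical coordinates $(\rho,\theta,\varphi)$ with $\theta$ the angle between $x$ and $y$, the $\varphi$-integral contributes a factor $2\pi$, and for the $\theta$-integral I substitute $s = |x-y| = \sqrt{d^2+\rho^2-2d\rho\cos\theta}$. This is the crucial step: it collapses the angular integral into a radial one,
\begin{equation}
\int_0^\pi \frac{e^{iks}}{s}\sin\theta\,d\theta = \frac{1}{d\rho}\int_{|d-\rho|}^{d+\rho} e^{iks}\,ds = \frac{e^{ik(d+\rho)} - e^{ik|d-\rho|}}{ik\,d\rho},
\end{equation}
and reduces the computation to a one-dimensional integral in $\rho$.

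It then remains to evaluate $-\frac{1}{2ikd}\int_0^r \rho\big(e^{ik(d+\rho)} - e^{ik|d-\rho|}\big)\,d\rho$, and here the two cases of \eqref{helmsol} appear naturally according to the sign of $d-\rho$. For $x\notin B_{x_0,r}$ one has $d>r\ge\rho$, so $|d-\rho| = d-\rho$ on the whole range and the integral reduces to $\int_0^r\rho\sin(k\rho)\,d\rho$. For $x\in B_{x_0,r}$ one has $d<r$ and must split the integral at $\rho=d$, using $|d-\rho| = d-\rho$ on $[0,d]$ and $|d-\rho|=\rho-d$ on $[d,r]$; the second piece is responsible for the non-oscillatory term $-2dk$ in the interior formula. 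The elementary antiderivatives of $\rho\sin(k\rho)$ and $\rho e^{\pm ik\rho}$ (integration by parts) then assemble, after simplification and upon matching the sign convention of \eqref{fsp}, into the two closed forms in \eqref{helmsol}.

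I do not expect any single step to be hard; the main obstacle is the bookkeeping, namely handling the absolute value $|d-\rho|$ (the split at $\rho=d$ in the interior case) and then compressing the resulting sums of exponentials into the compact form displayed in \eqref{helmsol}. As consistency checks I would verify that the two expressions agree at $d=r$, so that $u_k$ is continuous across $\partial B_{x_0,r}$, and that the exterior expression is a constant multiple of $e^{ikd}/d$, which manifestly solves the homogeneous equation for $d>0$ and satisfies the radiation condition.
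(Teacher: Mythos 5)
Your overall strategy is the same as the paper's---write $u_k$ as the convolution of $\chi_{B_{x_0,r}}$ with the outgoing fundamental solution and reduce the three-dimensional integral to a one-dimensional radial one by symmetry---but the reduction itself is genuinely different. The paper foliates $B_{x_0,r}$ by spheres $\partial B_{x,z}$ centered at the \emph{observation point} $x$, on which the kernel is constant, and integrates the spherical-cap area $A(z)=2\pi z h(z)$ over $z\in[d-r,d+r]$ (splitting off a full-sphere contribution for $z\le r-d$ when $x$ is interior). You instead use spherical coordinates centered at $x_0$ and substitute $s=|x-y|$ in the angular integral, arriving at $-\frac{1}{2ikd}\int_0^r\rho\bigl(e^{ik(d+\rho)}-e^{ik|d-\rho|}\bigr)\,d\rho$. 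The two routes are equivalent in content (your case split at $\rho=d$ plays the role of the paper's sphere/cap split), and both terminate in the same elementary antiderivatives; your angular substitution is arguably the more standard textbook device, while the paper's cap construction generalizes verbatim to its Schr\"odinger and wave-equation computations.

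There is, however, one point you cannot wave away with the phrase ``upon matching the sign convention.'' You correctly note that $(\Delta+k^2)\Phi_k=-\delta_0$, hence the radiating solution of $(\Delta+k^2)u=\chi_{B_{x_0,r}}$ is $u_k=-\Phi_k*\chi_{B_{x_0,r}}$. Carrying your computation through with that minus sign yields, in the exterior case, $-\frac{e^{ikd}}{k^3d}\bigl(\sin(kr)-kr\cos(kr)\bigr)$, which is the \emph{negative} of \eqref{helmsol} (the displayed numerator simplifies to $2e^{ikd}(\sin kr-kr\cos kr)$). The paper's own proof evaluates $+\Phi_k*\chi_{B_{x_0,r}}$ without the minus sign, which is why it lands exactly on \eqref{helmsol}; as stated, that expression solves $(\Delta+k^2)u=-\chi_{B_{x_0,r}}$ (check the limit $k\to 0$ against the Newtonian potential, or the non-oscillatory interior term $-1/k^2$, for which $(\Delta+k^2)(-1/k^2)=-1$). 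So your derivation is internally consistent but will produce $-u_k$ relative to the Proposition; to ``prove'' the statement as written you would have to drop the very sign you rightly insist on tracking. You should state explicitly which convention you adopt and accept that the result then differs from \eqref{helmsol} by an overall factor of $-1$, rather than leaving the reconciliation implicit.
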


\begin{proof}
The solution to \eqref{fsp} is given by the convolution 
\begin{equation}
 u_k(x) = \int_{B_0} G_k(x-y)\chi_{B_{x_0,r}}(y) \mathrm{d}y, \quad  x \in \mathbf{R}^3.
\label{Fk}
\end{equation}
Here $G_k$ is the outgoing fundamental solution of the Helmholtz equation in $\mathbf{R}^3$ (cf. \cite{coltonkress}),
\begin{equation}
 G_k(x) = 
		\frac{\mathrm{exp}(ik|x|)}{4\pi|x|},\quad x\in\mathbf{R}^3\setminus\{0\}.
\end{equation}
We now evaluate the integral \eqref{Fk}. 
Assume first that $x \in \mathbf{R}^3 \setminus B_{x_0,r}$ and let $d = |x - x_0|$. Let $B_{x,z}$ be a ball centered at $x$ with radius $z$, where $ d-r \leq  z \leq d+r$. Let $S(z) = \partial B_{x,z} \cap B_{x_0,r}$, i.e., the part of the surface $\partial B_{x,z}$ contained in $B_{x_0,r}$. The surface area of $S(z)$ is given by $A(z) = 2\pi zh(z) $, where $h(z)$ is the height of the spherical cap (cf. \cite{zwillinger2002crc}, p. 224). Figure \ref{fig:diagram} depicts the situation. Computing the length $z_i$ from $x$ to the intersection of the spheres, we find that $h(z) = z-z_i= z\left(1 - \frac{z^2 + d^2-r^2}{2dz} \right)$. Next, we note that for $y \in \partial B_{x,z} $ the integrand is constant; $\frac{\exp(ik|x-y|)}{4\pi |x -y|} = \frac{\exp(ikz)}{4\pi z}$. We now write $\mathrm{d}y = \mathrm{d}V(z) = A(z)\mathrm{d}z$. Hence we have reduced the integral dimension from 3 to 1, and we get
\begin{align*}
   u_k(x) &= \int_{\mathbf{R}^3}  G_k(x-y) \chi_{B_{x_0,r}}(y) \mathrm{d}y 
      = \int_{B_{x_0,r}}  \frac{\mathrm{e}^{ik|x-y|}}{4\pi |x -y|} \mathrm{d}y = \int_{d-r}^{d+r} \frac{\mathrm{e}^{ikz}}{4\pi z} A(z)\mathrm{d}z \\
          &= \frac{1}{2} \int_{d-r}^{d+r} \mathrm{e}^{ikz}z\left(1 - \frac{z^2 + d^2-r^2}{2zd} \right) \mathrm{d}z \\
          &= \frac{(i-kr)\mathrm{e}^{ik(d-r)} - (i +kr)\mathrm{e}^{ik(d+r)}}{2dk^3}.
\end{align*}

\begin{figure}
    \centering
    \includegraphics[width=1\textwidth]{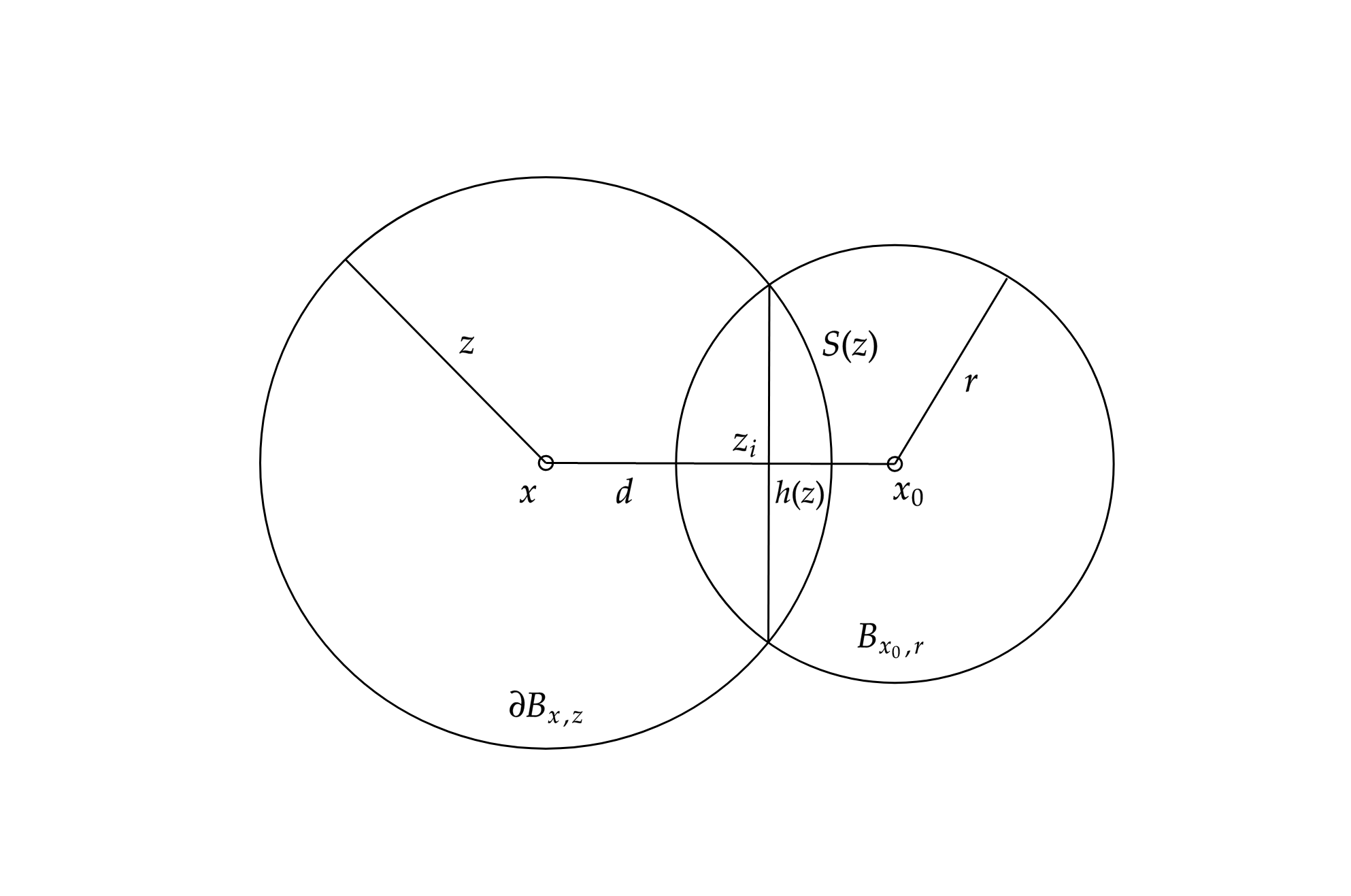}
    \caption{2-dimensional sketch of the intersecting spheres. The spherical cap $S(z)$ is the part of $\partial B_{x,z}$ contained in $B_{x_0,r}$.}
    \label{fig:diagram}
\end{figure}

Next, assume that $x \in B_{x_0,r}\setminus\{x_0\}$. We split the integral into two parts: a spherical integral when $ 0 \leq z \leq r-d$, and a spherical cap integral when $r-d \leq z \leq r+d$. 
\begin{align*}
   u_k(x) &= \int_{\mathbf{R}^3}  G_k(x-y) \chi_{B_{x_0,r}}(y) \mathrm{d}y \\
          &= \int_{B_{x_0,r}}  \frac{\mathrm{e}^{ik|x-y|}}{4\pi |x -y|} \mathrm{d}y \\
          &= \int_{0}^{r-d} \frac{\mathrm{e}^{ikz}}{4\pi z} 4 \pi z^2 \mathrm{d}z + \frac{1}{2} \int_{r-d}^{r+d} \mathrm{e}^{ikz}z\left(1 - \frac{z^2 + d^2-r^2}{2zd} \right) \mathrm{d}z\\
          &= \frac{\mathrm{e}^{ik(r-d)}(1 - ik(r-d)) - 1}{k^2} + \frac{ (i -2idk^2(d-r)+(r-2d))\mathrm{e}^{ik(r-d)} - (i +kr)\mathrm{e}^{ik(r+d)}}{2dk^3} \\
          &= \frac{ (i +kr)(\mathrm{e}^{ik(r-d)}-\mathrm{e}^{ik(r+d)})  - 2dk }{2dk^3}.
\end{align*}

Last, we have 
\begin{align*}
    u_k(x_0) &= \int_{\mathbf{R}^3}  G_k(x_0-y)\chi_{B_{x_0,r}}(y) \mathrm{d}y \\
            &= \int_0^r \frac{\mathrm{e}^{ikz}}{4\pi z} 4\pi z^2 \mathrm{d}z \\
            & = \frac{\mathrm{e}^{ikr}(1-ikr) - 1}{k^2},
\end{align*}
and it is straight forward to check that 
$$ u_k(x_0)=\lim_{d\to 0} \frac{ (i +kr)(\mathrm{e}^{ik(r-d)}-\mathrm{e}^{ik(r+d)})  - 2dk }{2dk^3} = \frac{\mathrm{e}^{ikr}(1-ikr) - 1}{k^2}.$$

\end{proof}

\textbf{\emph{Complex wavenumber:}} The above calculation also holds for complex wave numbers. If one considers instead the operator $(\Delta  + \kappa^2) $ with $\kappa^2 = k^2 + ik\sigma$, where $\sigma$ is an attenuation parameter (cf. \cite{li2021inverse}), the solution is again given by 
$$ u_{\kappa}(x) = \int_{B_0} G_{\kappa}(x-y)\chi_{B_{x_0,r}}(y) \mathrm{d}y, \quad  x \in \mathbf{R}^3, $$
where
$$
 G_{\kappa}(x) = 
		\frac{\mathrm{exp}(i\kappa|x|)}{4\pi|x|},\quad x\in\mathbf{R}^3\setminus\{0\},
$$
and $$ \text{Re}(\kappa) = \left( \frac{\sqrt{k^4 + k^2\sigma^2}+k^2}{2}\right)^{1/2}, \qquad \text{Im}(\kappa) = \left( \frac{\sqrt{k^4 + k^2\sigma^2}-k^2}{2}\right)^{1/2}.$$

As one can readily check, the solution $u_\kappa(x)$ is identical to the one in Proposition \ref{helm}, but with $k$ replaced by $\kappa$.

\subsection{Schrödinger equation}
We consider the Schrödinger equation without potential, 
\begin{equation}
    \begin{cases} &i\hbar \dfrac{\partial u}{\partial t} + \frac{\hbar^2}{2m}\Delta u = 0, \quad x \in \mathbf{R}^3, t > 0, \\
                   &u(x,0) = \chi_{B_{x_0,r}}(x), \quad x \in  \mathbf{R}^3.  
    \end{cases}
    \label{schrödinger}
\end{equation}
Here $m$ is the mass of the particle and $\hbar$ is the Planck constant. The solution $u$ is called the wave function, and $|u(x,t)|^2$ is interpreted as the probability density function of the particle; the probability that the particle is contained in some region $\Omega \subset  \mathbf{R}^3$ at the time $t$ is given by 
\begin{equation*}
    P(p \in \Omega) = \int_\Omega |u(x,t)|^2 \mathrm{d}x. 
\end{equation*}
Moreover, we have that $\|u(x,t)\|_{L^2(\mathbf{R}^3)} = \|u(x,0)\|_{L^2(\mathbf{R}^3)} $, i.e., conservation of probability. We require the total probability to sum to one at all times. Hence, for the solution to be physically meaningful, the initial condition should be multiplied by $ (4\pi r^3/3)^{-1/2}$, and $(4\pi r^3/3)^{-1}\chi_{B_{x_0,r}}(x)$ represents a uniform probability distribution on $B_{x_0,r}$ with the corresponding solution given by $(4\pi r^3/3)^{-1/2}u(x,t)$.

\begin{proposition}
\label{schro}
Let $d = |x - x_0|$ and $\mathcal{M}_t = m/2\hbar t$.  For $u(x,0) = \chi_{B_{x_0,r}}(x)$, the solution to \eqref{schrödinger} is given by 
\begin{align*}
    u(x,t)=& \frac{1}{2} \operatorname{erf}\left(\mathrm{e}^{i3\pi/4}(\mathcal{M}_t)^{1/2}(d-r)\right)-\frac{1}{2}\operatorname{erf}\left(\mathrm{e}^{i3\pi/4}(\mathcal{M}_t)^{1/2}(d+r)\right) \\
    &+ \dfrac{\mathrm{e}^{-i3\pi/4}\left(\mathrm{e}^{i\mathcal{M}_t(d-r)^2} -
    \mathrm{e}^{i\mathcal{M}_t(d+r)^2}\right)}{(\mathcal{M}_t)^{1/2}d\sqrt{\pi}}, \quad \text{for } t >0, x \in \mathbf{R}^3.
\end{align*}
\vspace{4mm}
\end{proposition}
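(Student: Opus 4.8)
The plan is to mirror the method of Proposition \ref{helm}, replacing the Helmholtz Green's function by the free Schrödinger propagator. First I would write the solution of \eqref{schrödinger} as the convolution of the initial data with the propagator,
\begin{equation*}
u(x,t) = \int_{\mathbf{R}^3} K(x-y,t)\,\chi_{B_{x_0,r}}(y)\,\mathrm{d}y, \qquad K(x,t) = \left(\frac{m}{2\pi i\hbar t}\right)^{3/2}\exp\left(\frac{im|x|^2}{2\hbar t}\right).
\end{equation*}
With $\mathcal{M}_t = m/2\hbar t$ the kernel becomes $K(x,t) = (\mathcal{M}_t/\pi i)^{3/2}\mathrm{e}^{i\mathcal{M}_t|x|^2}$, and since the prefactor is constant in $y$ while $\chi_{B_{x_0,r}}$ has compact support, the integral converges absolutely as an ordinary Lebesgue integral, so no regularization of the oscillatory kernel is needed for this datum. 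Writing $(\pi i)^{-3/2} = \pi^{-3/2}\mathrm{e}^{-i3\pi/4}$ already produces the phase $\mathrm{e}^{-i3\pi/4}$ seen in the statement.

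Next I would carry out exactly the geometric reduction used in Proposition \ref{helm}: the integrand $\mathrm{e}^{i\mathcal{M}_t|x-y|^2}$ is constant on each sphere $\partial B_{x,z}$, so with $\mathrm{d}y = A(z)\,\mathrm{d}z$, $A(z)=2\pi z\,h(z)$ and $h(z)= z - \tfrac{z^2+d^2-r^2}{2d} = \tfrac{r^2-(z-d)^2}{2d}$, the three-dimensional integral collapses to a one-dimensional one. For $x\notin B_{x_0,r}$ this gives
\begin{equation*}
u(x,t) = \Big(\tfrac{\mathcal{M}_t}{\pi i}\Big)^{3/2}\int_{d-r}^{d+r}\mathrm{e}^{i\mathcal{M}_t z^2}\Big(2\pi z^2 - \tfrac{\pi z^3}{d} - \tfrac{\pi(d^2-r^2)z}{d}\Big)\,\mathrm{d}z .
\end{equation*}
The $z^2$ term, after one integration by parts, reduces to $\int \mathrm{e}^{i\mathcal{M}_t z^2}\mathrm{d}z$, which is the source of the error functions; the $z$ and $z^3$ terms have elementary exponential antiderivatives. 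All contributions proportional to $h(z)$ vanish at the endpoints $z=d\pm r$ (since $h(d\pm r)=0$, the cap degenerating there), so the only surviving non-$\operatorname{erf}$ boundary terms come from the constant of integration in $\int z^3\mathrm{e}^{i\mathcal{M}_t z^2}\mathrm{d}z$, producing the terms $\mathrm{e}^{i\mathcal{M}_t(d\mp r)^2}$. Collecting everything yields the claimed closed form.

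The main obstacle is the evaluation of $\int \mathrm{e}^{i\mathcal{M}_t z^2}\mathrm{d}z$ via $\operatorname{erf}$. I would start from $\int \mathrm{e}^{-pz^2}\mathrm{d}z = \tfrac{\sqrt{\pi}}{2\sqrt{p}}\operatorname{erf}(\sqrt{p}\,z)$, valid for $\operatorname{Re}p>0$, and analytically continue to the purely imaginary value $p=-i\mathcal{M}_t$; this is legitimate because the resulting Fresnel-type integral still converges and both sides are analytic in $p$ on the closed right half-plane. The continuation forces the branch $\sqrt{-i\mathcal{M}_t}=\mathcal{M}_t^{1/2}\mathrm{e}^{-i\pi/4}$, and tracking this phase consistently together with the overall $\mathrm{e}^{-i3\pi/4}$ from the propagator is the delicate bookkeeping; using the oddness of $\operatorname{erf}$ one rewrites $\mathrm{e}^{-i\pi/4}$ as $\mathrm{e}^{i3\pi/4}$ to match the stated form.

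Finally I would show that the single formula also covers $x\in B_{x_0,r}$. For interior points the integral splits into a full-shell part on $[0,r-d]$ (with area $4\pi z^2$) and a cap part on $[r-d,r+d]$. Letting $\Phi$ denote an antiderivative of the exterior integrand above, the exterior formula is $\Phi(d+r)-\Phi(d-r)$, so agreement with the interior value requires the full-shell contribution to equal $\Phi(r-d)-\Phi(-(r-d))$. Because the kernel $\mathrm{e}^{i\mathcal{M}_t z^2}$ is \emph{even} in $z$ and the only even part of $A(z)$ is $2\pi z^2$, the odd terms integrate to zero over the symmetric interval and indeed $\Phi(r-d)-\Phi(-(r-d)) = \big(\tfrac{\mathcal{M}_t}{\pi i}\big)^{3/2}\int_0^{r-d}\mathrm{e}^{i\mathcal{M}_t z^2}4\pi z^2\,\mathrm{d}z$, which is exactly the full-shell contribution; the sign of $d-r<0$ is absorbed by $\operatorname{erf}$. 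This evenness is precisely what fails for the Helmholtz kernel $\mathrm{e}^{ikz}$, explaining why there the interior and exterior expressions did not coincide. The case $x=x_0$ is then the removable singularity $d\to 0$, checked by L'Hopital against the direct evaluation $\big(\tfrac{\mathcal{M}_t}{\pi i}\big)^{3/2}\int_0^r\mathrm{e}^{i\mathcal{M}_t z^2}4\pi z^2\,\mathrm{d}z$, as in Proposition \ref{helm}.
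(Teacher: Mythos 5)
Your proposal is essentially the paper's own proof: convolution with the free propagator, the identical spherical-cap reduction $\mathrm{d}y = 2\pi z\,h(z)\,\mathrm{d}z$, evaluation of the resulting one-dimensional Fresnel integrals through $\operatorname{erf}$ with the branch $\sqrt{-i\mathcal{M}_t}=\mathcal{M}_t^{1/2}\mathrm{e}^{-i\pi/4}$, and the same exterior/interior/center case split with a $d\to 0$ limit at the end. The one genuine difference is your treatment of the interior case: the paper recomputes the integral on $[0,r-d]\cup[r-d,r+d]$ and only afterwards verifies, via $\operatorname{erf}(-z)=-\operatorname{erf}(z)$, that the answer coincides with the exterior expression, whereas your parity argument (evenness of $\mathrm{e}^{i\mathcal{M}_t z^2}$ together with the fact that only the $2\pi z^2$ part of $A(z)$ is even in $z$) shows in advance that the single antiderivative difference $\Phi(d+r)-\Phi(d-r)$ covers both regions, and it correctly isolates why no such unification occurs for the Helmholtz kernel $\mathrm{e}^{ikz}$. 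That saves one long computation and is the better explanation of why a single formula holds for all $x$.

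One point to settle before asserting that ``collecting everything yields the claimed closed form'': the surviving non-$\operatorname{erf}$ boundary term of the antiderivative is $-\frac{\pi}{2\mathcal{M}_t^{2}d}\,\mathrm{e}^{i\mathcal{M}_t z^2}$ evaluated between $z=d\mp r$, which after the prefactor $\pi^{-3/2}\mathrm{e}^{-i3\pi/4}\mathcal{M}_t^{3/2}$ gives $\frac{\mathrm{e}^{-i3\pi/4}\left(\mathrm{e}^{i\mathcal{M}_t(d-r)^2}-\mathrm{e}^{i\mathcal{M}_t(d+r)^2}\right)}{2\,\mathcal{M}_t^{1/2}d\sqrt{\pi}}$ --- half of the last term as displayed in the proposition. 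Consistently, the $d\to 0$ limit of the displayed formula yields twice the second term of the directly computed center value $-\operatorname{erf}\left(\mathrm{e}^{i3\pi/4}\mathcal{M}_t^{1/2}r\right)+\frac{2\mathrm{e}^{i3\pi/4}r\mathcal{M}_t^{1/2}}{\sqrt{\pi}}\mathrm{e}^{i\mathcal{M}_t r^2}$. So your (correct) plan will not reproduce the statement verbatim; track that constant explicitly rather than asserting the match.
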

Above, $\operatorname{erf}(z) = 2\pi^{-1/2}\int_0^z \mathrm{e}^{-t^2}\mathrm{d}t$ is the error function. 

\begin{proof}

The fundamental solution for the Schrödinger equation in $\mathbf{R}^3$ is given by (cf. \cite{eskin2011lectures,craig2018course})
\begin{equation}
    G(x,t) = \left(\frac{m \mathrm{e}^{-i\pi/2}}{2\pi \hbar  t}\right)^{3/2}\mathrm{e}^{\frac{im|x|^2}{2\hbar t}}, \quad x \in \mathbf{R}^3, t > 0,   
\end{equation}
and the solution to \eqref{schrödinger} is given by 
\begin{equation}
    u(x,t) = \int_{\mathbf{R}^3}G(x-y,t)\chi_{B_{x_0,r}}(y) \mathrm{d}y. 
\end{equation}

Proceeding as in the proof of proposition 1, we compute the above integral. We write $\mathcal{M}_t = m/2\hbar t$. For fixed $t > 0$, the observation that $G(x-y,t)$ is constant on the sphere $\partial B_{x,|x-y|}$ still holds, and for $x \in \mathbf{R}^3 \setminus B_{x_0,r}$ and  $t > 0 $ we have

\begin{align*}
    u(x,t) &=  \frac{\mathrm{e}^{-i3\pi/4}}{\pi^{3/2}}\mathcal{M}_t^{3/2}\int_{B_{x_0,r}} \mathrm{e}^{i\mathcal{M}_t|x-y|^2} \mathrm{d}y \\
    &= \frac{\mathrm{e}^{-i3\pi/4}}{\pi^{3/2}}\mathcal{M}_t^{3/2}\int_{d-r}^{d+r} \mathrm{e}^{i\mathcal{M}_tz^2} 2\pi z^2  \left(1 - \frac{z^2 + d^2-r^2}{2zd} \right) \mathrm{d}z \\
    &= \frac{\mathrm{e}^{-i3\pi/4}}{\pi^{3/2}}\mathcal{M}_t^{3/2}\Bigg(-\frac{(-1)^{1/4}\pi^{3/2} i\operatorname{erf}\left(i(-1)^{1/4}\mathcal{M}_t^{1/2}z\right)}{2\mathcal{M}_t^{3/2}}\Bigg|_{d-r}^{d+r} \\
    &\quad + \frac{i\pi\mathrm{e}^{i\mathcal{M}_tz^2}(\mathcal{M}_t(d^2 - 2dz -r^2 + z^2) + i) }{2d\mathcal{M}_t^{2}}\Bigg|_{d-r}^{d+r} 
     \Bigg) \\
    & = \frac{1}{2}\Bigg( \operatorname{erf}\left(\mathrm{e}^{i3\pi/4}(\mathcal{M}_t)^{1/2}(d-r)\right)-\operatorname{erf}\left(\mathrm{e}^{i3\pi/4}(\mathcal{M}_t)^{1/2}(d+r)\right) \\
    & \quad + \frac{2\mathrm{e}^{-i3\pi/4}\left(\mathrm{e}^{i\mathcal{M}_t(d-r)^2} -
    \mathrm{e}^{i\mathcal{M}_t(d+r)^2}\right)}{(\mathcal{M}_t)^{1/2}d\sqrt{\pi}} \Bigg).
\end{align*}

For $x \in B_{x_0,r} \setminus \{x_0\}$ and  $t > 0 $ we compute 
\begin{align*}
   u(x,t) &=  \frac{\mathrm{e}^{-i3\pi/4}}{\pi^{3/2}}\mathcal{M}_t^{3/2}\int_{B_{x_0,r}} \mathrm{e}^{i\mathcal{M}_t|x-y|^2} \mathrm{d}y  \\
    &= \frac{\mathrm{e}^{-i3\pi/4}}{\pi^{3/2}}\mathcal{M}_t^{3/2}\left( \int_0^{r-d}  \mathrm{e}^{i\mathcal{M}_t z^2} 4\pi z^2 \mathrm{d}z + \int_{r-d}^{r+d} \mathrm{e}^{i\mathcal{M}_t z^2} 2 \pi z^2  \left(1 - \frac{z^2 + d^2-r^2}{2zd} \right) \mathrm{d}z \right) \\
    &= \frac{\mathrm{e}^{-i3\pi/4}}{\pi^{3/2}}\mathcal{M}_t^{3/2}\Bigg( -\frac{(-1)^{1/4}\pi^{3/2}\operatorname{erf}\left(i(-1)^{1/4}\mathcal{M}_t^{1/2}z\right)}{\mathcal{M}_t^{3/2}}\Bigg|_0^{r-d}  - \frac{2\pi i z\mathrm{e}^{i\mathcal{M}_t z^2}}{\mathcal{M}_t}\Bigg|_0^{r-d} \\
    &\quad - \frac{(-1)^{1/4}\pi^{3/2} i\operatorname{erf}\left(i(-1)^{1/4}\mathcal{M}_t^{1/2}z\right)}{2\mathcal{M}_t^{3/2}}\Bigg|_{r-d}^{r+d} + \frac{i\pi\mathrm{e}^{i\mathcal{M}_tz^2}(\mathcal{M}_t(d^2 - 2dz -r^2 + z^2) + i) }{2d\mathcal{M}_t^{2}}\Bigg|_{r-d}^{r+d} \Bigg) \\
    & = - \operatorname{erf}\left(\mathrm{e}^{i3\pi/4}\mathcal{M}_t^{1/2}(r-d)\right) - \frac{2i\mathrm{e}^{-i\pi3/4}\mathcal{M}_t^{1/2}(r-d)\mathrm{e}^{i\mathcal{M}_t(r-d)^2}}{\sqrt{\pi}} - \frac{\operatorname{erf}\left(\mathrm{e}^{-i3\pi/4}\mathcal{M}_t^{1/2}(r+d)\right)}{2} \\
    &\quad + \frac{\operatorname{erf}\left(\mathrm{e}^{i3\pi/4}\mathcal{M}_t^{1/2}(r-d)\right)}{2} + \frac{\mathrm{e}^{-i3\pi/4}\left(\mathrm{e}^{i\mathcal{M}_t(r-d)^2} -
    \mathrm{e}^{i\mathcal{M}_t(r+d)^2}\right)}{\mathcal{M}_t^{1/2}d \sqrt{\pi}} \\
    & \quad -\frac{i\mathrm{e}^{-i\pi3/4}\mathcal{M}_t^{1/2}\mathrm{e}^{i\mathcal{M}_t(r-d)^2}(4d(d-r)) }{2d\sqrt{\pi}} \\
    &=  \frac{1}{2}\Bigg( \operatorname{erf}\left(\mathrm{e}^{i3\pi/4}(\mathcal{M}_t)^{1/2}(d-r)\right)-\operatorname{erf}\left(\mathrm{e}^{i3\pi/4}(\mathcal{M}_t)^{1/2}(d+r)\right) \\
    & \quad + \frac{2\mathrm{e}^{-i3\pi/4}\left(\mathrm{e}^{i\mathcal{M}_t(d-r)^2} -
    \mathrm{e}^{i\mathcal{M}_t(d+r)^2}\right)}{(\mathcal{M}_t)^{1/2}d\sqrt{\pi}} \Bigg), 
\end{align*}
where the last equality follows from the fact that $\text{erf}(-z) = - \text{erf}(z)$.
Last, we have 
\begin{align*}
    u(x_0,t) &= \frac{\mathrm{e}^{-i3\pi/4}}{\pi^{3/2}}\mathcal{M}_t^{3/2}\int_0^{r}  \mathrm{e}^{i\mathcal{M}_t z^2} 4\pi z^2 \mathrm{d}z \\
    &= \mathrm{e}^{-i3\pi/4}\left( -(-1)^{1/4}i\operatorname{erf}\left(i(-1)^{1/4}\mathcal{M}_t^{1/2}r\right) -2i\pi^{-1/2}r \mathcal{M}_t^{1/2} \mathrm{e}^{i\mathcal{M}_tr^2} \right) \\
    &= -\operatorname{erf}\left(\mathrm{e}^{i3\pi/4}(\mathcal{M}_t)^{1/2}r\right) +\frac{2\mathrm{e}^{i3\pi/4}r (\mathcal{M}_t)^{1/2}}{\sqrt{\pi}} \mathrm{e}^{i(\mathcal{M}_t)r^2}.
\end{align*}

We check that 
\begin{align}
u(x,t) &= \lim_{d\to 0} -\frac{1}{2}\Bigg( \operatorname{erf}\left(\mathrm{e}^{i3\pi/4}(\mathcal{M}_t)^{1/2}(r-d)\right)  +\operatorname{erf}\left(\mathrm{e}^{i3\pi/4}(\mathcal{M}_t)^{1/2}(r+d)\right)  \\
& - \frac{2\mathrm{e}^{-i3\pi/4}\left(\mathrm{e}^{i\mathcal{M}_t(r-d)^2} -
    \mathrm{e}^{i\mathcal{M}_t(r+d)^2}\right)}{(\mathcal{M}_t)^{1/2}d \sqrt{\pi}} \Bigg) \\
    &= -\operatorname{erf}\left(\mathrm{e}^{i3\pi/4}(\mathcal{M}_t)^{1/2}r\right) +\frac{2\mathrm{e}^{i3\pi/4}r (\mathcal{M}_t)^{1/2}}{\sqrt{\pi}} \mathrm{e}^{i(\mathcal{M}_t)r^2}. 
\end{align}

\end{proof}

\subsection{Wave equation}
The linear Cauchy problem for the wave equation is 
\begin{equation}
    \begin{cases} &\dfrac{\partial^2 u}{\partial t^2} - c^2 \Delta u = 0, \quad x \in \mathbf{R}^3, t > 0, \\
                   &u(x,0) = f(x), \quad \frac{\partial u(x,0)}{\partial t} = g(x), \quad x \in \mathbf{R}^3. 
 \end{cases}
 \label{waveeq}
\end{equation}
Here $c$ is the wave speed, $u(x,t)$ the wave amplitude and $f(x)$ and $g(x)$ is the initial configuration and velocity of the wave. 
\begin{proposition}
\label{wave}
Let $d =|x -x_0|$. The solution to \eqref{waveeq} with $(f(x),g(x)) = (\chi_{B_{x_0,r}}(x),0)$ is given by 
\begin{equation}
    u(x,t) = \begin{cases}
                &\dfrac{d-ct}{2d}\chi_{[d-r,d+r]}(ct), \quad \text{for } x \in \mathbf{R}^3 \setminus B_{x_0,r}, t > 0, \\
                & \chi_{[0,r-d]}(ct) + \dfrac{d -ct}{2d} \chi_{[r-d,r+d]}(ct), \quad  \text{for } x \in B_{x_0,r}\setminus \{x_0\}, t > 0, \\
                & \chi_{[0,r]}(ct) -t\delta(r-ct), \quad \text{for } x = x_0, t >0. 
                \end{cases}
\end{equation}

The solution to \eqref{waveeq} with $(f(x),g(x)) = (0,\chi_{B_{x_0,r}}(x))$ is given by 
\begin{equation}
    u(x,t) = \begin{cases}
                &\dfrac{t}{2}\left(1 - \frac{(ct)^2 + d^2-r^2}{2dct}\right)\chi_{[d-r,d+r]}(ct), \quad \text{for } x \in \mathbf{R}^3 \setminus B_{x_0,r}, t > 0, \\
                &  t\chi_{[0,r-d]}(ct)+
     \dfrac{t}{2}\left(1 - \frac{(ct)^2 + d^2-r^2}{2dc t}\right)\chi_{[r-d,r+d]}(ct), \quad  \text{for } x \in B_{x_0,r}\setminus \{x_0\}, t > 0 \\
                & t\chi_{[0,r]}(ct), \quad \text{for } x = x_0, t > 0. 
                \end{cases}
\end{equation}

\end{proposition}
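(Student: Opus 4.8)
The plan is to invoke Kirchhoff's formula for the three-dimensional wave equation in its spherical-mean form. Writing the spherical mean of a function $h$ as $\bar h(x,\rho) = \frac{1}{4\pi \rho^2}\int_{\partial B_{x,\rho}} h\,\mathrm{d}S$, the solution of \eqref{waveeq} is
\[
u(x,t) = t\,\bar g(x,ct) + \frac{\partial}{\partial t}\bigl(t\,\bar f(x,ct)\bigr).
\]
Thus both statements reduce to a single geometric quantity: the mean of $\chi_{B_{x_0,r}}$ over the sphere $\partial B_{x,ct}$, which is precisely the fraction of that sphere's surface contained in $B_{x_0,r}$. This is exactly the object already computed in the proof of Proposition \ref{helm}.

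First I would evaluate this mean by reusing the spherical-cap area from that proof. With $z = ct$ and $d = |x-x_0|$, the cap $S(z) = \partial B_{x,z}\cap B_{x_0,r}$ has area $A(z) = 2\pi z\,h(z)$ with $h(z) = z\bigl(1 - \frac{z^2+d^2-r^2}{2dz}\bigr)$, so the mean equals $\frac{A(z)}{4\pi z^2} = \frac12\bigl(1 - \frac{z^2+d^2-r^2}{2dz}\bigr)$ whenever the sphere only partially meets the ball. Splitting by geometry then gives the mean piecewise: for $x$ outside the ball it is the cap fraction on $d-r\le z\le d+r$ and zero elsewhere; for $x$ inside it is $1$ on $0\le z\le r-d$, the cap fraction on $r-d\le z\le r+d$, and zero beyond; and for $x=x_0$ it is simply $\chi_{[0,r]}(z)$.

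With the mean in hand, the case $(f,g)=(0,\chi_{B_{x_0,r}})$ is immediate: substituting into $u=t\,\bar g(x,ct)$ and setting $z=ct$ reproduces the three stated formulas verbatim. For $(f,g)=(\chi_{B_{x_0,r}},0)$ I would form $t\,\bar f(x,ct)$ and differentiate in $t$. The crucial algebraic check is that the cap fraction vanishes at $z=|d\pm r|$, which keeps $t\,\bar f$ continuous across the breakpoints whenever $x\ne x_0$; hence $\partial_t(t\,\bar f)$ is an ordinary piecewise-smooth function, and differentiating $\frac{t}{2}-\frac{c^2t^2+d^2-r^2}{4dc}$ yields $\frac{d-ct}{2d}$ on the overlap interval, matching the claim both inside and outside the ball.

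The one genuinely delicate point, and the main obstacle, is the center $x=x_0$. There $\bar f(x_0,ct)=\chi_{[0,r]}(ct)$ has a true jump at $ct=r$, so $t\,\bar f$ is discontinuous and its $t$-derivative must be interpreted distributionally: the smooth part returns $\chi_{[0,r]}(ct)$, while the jump produces a delta concentrated on the light cone $ct=r$. I would obtain this term by writing $\chi_{[0,r]}(ct)=1-\theta(ct-r)$ with $\theta$ the Heaviside step and differentiating, taking care with the chain-rule Jacobian that relates $\delta(r-ct)$ to $\delta(t-r/c)$; this is the expected Huygens focusing singularity for a nonsmooth initial pulse, and it is absent for $x\ne x_0$ precisely because the vanishing-cap continuity check removes any jump there.
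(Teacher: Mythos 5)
Your proposal is correct and follows essentially the same route as the paper: both use the Kirchhoff/spherical-mean representation (the paper writes it via the fundamental solution $\frac{1}{4\pi c^2 t}\delta(|x|-ct)$, which is the same formula), reuse the spherical-cap area from the proof of Proposition \ref{helm} to evaluate the mean of $\chi_{B_{x_0,r}}$ over $\partial B_{x,ct}$ piecewise, and observe that the cap fraction vanishes at $z=d\pm r$ so that differentiating in $t$ produces no delta terms except at $x=x_0$. If anything, your treatment of the focusing delta at the center is more explicit than the paper's, which dispatches that case with ``obtained in the same way''; your care with the Jacobian in $\delta(r-ct)$ is warranted there.
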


\begin{proof}
The fundamental solution to \eqref{waveeq} with $f(x) = 0, g(x) = \delta(x)$ is 
$$G(x,t) =\begin{cases}
                    \dfrac{1}{4\pi c^2 t}\delta(|x| - ct), \quad &\text{for } x \in \mathbf{R}^3,t \geq 0, \vspace{3mm}\\
                    \quad 0 , \quad &\text{for } x \in \mathbf{R}^3,t < 0. 
\end{cases}$$
where $\delta(x)$ is the Dirac delta distribution (cf. \cite{eskin2011lectures}).
Hence, the solution to \eqref{waveeq} is given by 
\begin{align*}
u(x,t) &= \frac{\partial}{\partial t}\left(  \dfrac{1}{4\pi c^2 t}\int_{\mathbf{R}^3} \delta(|x-y| - ct) f(y) \mathrm{d}y \right) + \dfrac{1}{4\pi c^2 t}\int_{\mathbf{R}^3} \delta(|x-y| - ct) g(y) \mathrm{d}y \\
&= \frac{\partial}{\partial t}\left(  \dfrac{1}{4\pi c^2 t}\int_{|y| = 1} f(x+cty) \mathrm{d}S(y) \right) +\dfrac{1}{4\pi c^2 t} \int_{|y| = 1} g(x+cty) \mathrm{d}S(y), 
\end{align*} 
where $\mathrm{d}S(y)$ is the surface measure on $S^2$.  For $x \in \mathbf{R}^3\setminus B_{x_0,r}$ and $d = |x_0 -x|$ we compute 
\begin{align*}
     v(x,t) = \int_{|y| = 1} \chi_{B_{x_0,r}}(x + cty) \mathrm{d}S(y) = 2\pi (ct)^2\left(1 - \frac{(ct)^2 + d^2-r^2}{2dct}\right)\chi_{[d-r,d+r]}(ct).
\end{align*}
Here $\chi_{[d-r,d+r]}$ is the characteristic function on the interval $[d-r,d+r]$. For $x \in B_{x_0,r} \setminus \{x_0\} $ we have 
\begin{align*}
     v(x,t) = \int_{|y| = 1} \chi_{B_{x_0,r}}(x + cty) \mathrm{d}S(y) = 4\pi (ct)^2 \chi_{[0,r-d]}(ct)+
     2\pi (ct)^2\left(1 - \frac{(ct)^2 + d^2-r^2}{2dct}\right)\chi_{[r-d,r+d]}(ct).
\end{align*}
Last, 
\begin{align*}
     v(x_0,t) = \int_{|y| = 1} \chi_{B_{x_0,r}}(x_0 + cty) \mathrm{d}S(y) = 4\pi (ct)^2 \chi_{[0,r]}(ct).
\end{align*}

For $(f(x),g(x)) = (0,\chi_{B_{x_0,r}}) $ and $x \in \mathbf{R}^3 \setminus B_{x_0,r}, t > 0$, we get  
\begin{equation}
    u(x,t) = \frac{v(x,t)}{4\pi c^2 t}=\frac{t}{2}\left(1 - \frac{(ct)^2 + d^2-r^2}{2dct}\right)\chi_{[d-r,d+r]}(ct).
\end{equation}
Similar expressions are easily found for $x \in B_{x_0,r}$. \\
Next, we compute the solution for $(u(x,0),\partial_t u(x,0)) = (\chi_{B_{x_0,r}},0) $. For $x \in \mathbf{R}^3\setminus B_{x_0,r}, t > 0$ we 
have 
\begin{align*}
    u(x,t) &= \frac{\partial}{\partial t} \left(\frac{t}{2}\left(1 - \frac{(ct)^2 + d^2-r^2}{2dct}\right)\chi_{[d-r,d+r]}(ct) \right) \\
    &= \frac{t}{2}\left(1 - \frac{(ct)^2 + d^2-r^2}{2dct}\right)c\left(\delta(d-r - ct) - \delta(d+r - ct)\right)  + \frac{d-ct}{2d}\chi_{[d-r,d+r]}(ct)  \\
    & = \frac{d-ct}{2d}\chi_{[d-r,d+r]}(ct), 
\end{align*}
where the first term in the second line vanish due to $1 - \frac{(z)^2 + d^2-r^2}{2dz}$ having zeros at $d \pm r$. 
Solutions for $x \in B_{x_0,r}$ are obtained in the same way.

\end{proof}

\section{Approximate solutions for radial data}
\label{approxx}
We want to use the solutions above to approximate solutions when the data are radial functions supported on a ball. 
For a ball $B_{x_0,R}$, let $f(r) \in H^1(B_{x_0,R})$ be a radial function, i.e., a function of the radial coordinate $r=|x_0-x|$ only\footnote{Recall that $f$ is in $H^1(B_{x_0,R})$ if $\|f\|^2_{H^1(B_{x_0,R})} = \|f\|^2_{L^2(B_{x_0,R})} + \|\nabla f\|^2_{L^2(B_{x_0,R})} < \infty. $}. 
We want to approximate $f(r)$ by constant functions on spherical annulus regions. Define an annulus $S_{r_i,\Delta r}$ by $S_{r_i,\Delta r} = B_{x_0,r_i+\Delta r}\setminus B_{x_0,r_i}$. We define the approximation $f_N$ of $f$ by
\begin{equation}
    f_N(x) = \sum_{i=0}^{N-1} \Bar{f}_i \chi_{S_{r_i,\Delta r}}(x), \qquad \Delta r = \frac{R}{N}, \quad r_i = i\Delta r, \quad \Bar{f}_i = \frac{1}{\mu(S_{r_i,\Delta r})} \int_{S_{r_i,\Delta r}} f(x) \mathrm{d}x. 
    \label{approx}
\end{equation}
From the calculation 
\begin{align*}
\|f_N - f\|^2_{L^2(B_{x_0,R})} &= \sum_{i=0}^{N-1}\int_{S_{r_i,\Delta r}}|\bar{f}_i -f|^2 \mathrm{d}x \leq \sum_{i=0}^{N-1}(\Delta r)^2\int_{S_{r_i,\Delta r}}|f'|^2 \mathrm{d}x \\
& \leq \frac{R^2}{N^2}\|f\|^2_{H^1(B_{x_0,R})}, 
\end{align*}
where we have used the Poincaré inequality (cf. \cite{evans2010partial}), we have the following approximation estimate
\begin{equation}
    \|f_N - f\|_{L^2(B_{x_0,R})} \leq \frac{R}{N}\|f\|_{H^1(B_{x_0,R})}.
    \label{approxestimate}
\end{equation}

Now, let $u_k^{r_i}$ be the solution to the Helmholtz equation \eqref{fsp} with data $\chi_{S_{r_i,\Delta r}}$, i.e., a characteristic function on the annulus $S_{r_i,\Delta r}$. Since $\chi_{S_{r_i,\Delta r}} =  \chi_{x_0,r_i + \Delta r} -\chi_{x_0,r_i}$, the linearity of \eqref{fsp} implies that $u_k^{r_i}$ is given by the difference of the solutions \eqref{helmsol} with $\chi_{x_0,r_i \Delta r}$ and $\chi_{x_0,r_i}$ as data, respectively. For example, for $x \in \mathbf{R}^3\setminus B_{x_0,R}$, we have 
\begin{align*}u_k^{r_i}(x) &=  \dfrac{(i -k(r_i+\Delta r))\mathrm{e}^{ik(|x_0-x|-(r_i+\Delta r))} - (i +k(r_i+\Delta r))\mathrm{e}^{ik(|x_0-x|+(r_i+\Delta r))}}{2|x_0-x|k^3} \\
&- \dfrac{(i -kr_i)\mathrm{e}^{ik(|x_0-x|-r_i)} - (i +kr_i)\mathrm{e}^{ik(|x_0-x|+r_i)}}{2|x_0-x|k^3}.
\end{align*}
For $N >0$ and $f \in H^1(B_{x_0,R})$, let $f_N$ be the approximation. Inserting $f_N$ as data in \eqref{fsp}, we find that 
\begin{equation}
    u_k^N = \sum_{i=0}^{N-1} \Bar{f}_i u_k^{r_i} 
    \label{un}
\end{equation} 
is the corresponding approximate solution to \eqref{fsp}. 
Taking $u_k$ to be the solution to \eqref{fsp} with data $f$, we have that $${u_k^N - u_k = \int_{B_{x_0,R}} G_k(x-y)(f_N(y) -f(y)) \mathrm{d}y}.$$ Applying the Cauchy-Schwarz inequality, it follows that
\begin{align*}
    \|u_k^N - u_k\|_{L^\infty(\mathbf{R}^3)} &\leq \sup_{x\in\mathbf{R}^3} \int_{B_{x_0,R}}|G_k(x-y)(f_N(y)-f(y))| \mathrm{d}y\\
    &\leq \sup_{x\in \mathbf{R}^3}\|G_k(x -\cdot) \|_{L^2(B_{x_0,R})}\|(f_N -f)\|_{L^2(B_{x_0,R})} \leq \frac{R^{3/2}}{\sqrt{4\pi}N}\|f\|_{H^1(B_{x_0,R})},
\end{align*}
where the last inequality is a consequence of the estimate 
$$ \int_{B_{x_0,R}} |G_k(x-y)|^2 \mathrm{d}y \leq \frac{1}{(4\pi)^2}\int_{B_{0,R}} \frac{1}{|z|^2} \mathrm{d}z = \frac{R}{4\pi} \quad \text{for all } x \in \mathbf{R}^3.$$ 

We summarize the result in a proposition. 
\newline 
\begin{proposition}
For $N \in \mathbf{N}$, let $f_N$ be the piecewise constant approximation to a radial function $f \in H^1(B_{x_0,R})$ given by \eqref{approx}. Let $u_k^N$ and $u_k$ be solutions to \eqref{fsp} with data $f_N$ and $f$, respectively. Then 
$$  \|u_k^N - u_k\|_{L^\infty(\mathbf{R}^3)} \leq \frac{R^{3/2}}{\sqrt{4\pi}N}\|f\|_{H^1(B_{x_0,R})}.$$\\
\end{proposition}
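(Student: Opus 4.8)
The plan is to express the difference $u_k^N - u_k$ as a single convolution against the fundamental solution and then to separate the two sources of error — the kernel and the data mismatch — via Cauchy--Schwarz. First, since $u_k^N$ and $u_k$ solve \eqref{fsp} with data $f_N$ and $f$, the linearity of the equation together with the representation \eqref{Fk} gives, for every $x \in \mathbf{R}^3$,
\[
u_k^N(x) - u_k(x) = \int_{B_{x_0,R}} G_k(x-y)\bigl(f_N(y) - f(y)\bigr)\,\mathrm{d}y.
\]
Bounding the integrand by its absolute value and taking the supremum over $x$, I would then apply the Cauchy--Schwarz inequality in $L^2(B_{x_0,R})$ to factor the right-hand side as $\|G_k(x-\cdot)\|_{L^2(B_{x_0,R})}\,\|f_N-f\|_{L^2(B_{x_0,R})}$. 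The second factor is independent of $x$ and is controlled directly by the approximation estimate \eqref{approxestimate}, namely $\|f_N-f\|_{L^2(B_{x_0,R})} \le \frac{R}{N}\|f\|_{H^1(B_{x_0,R})}$, so it comes straight out of the supremum.

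The crux is a bound on $\|G_k(x-\cdot)\|_{L^2(B_{x_0,R})}$ that is uniform in $x\in\mathbf{R}^3$. The key observation is that $|G_k(z)| = \tfrac{1}{4\pi|z|}$ is independent of the (real) wavenumber $k$, so the resulting estimate is frequency-independent. The only delicate point is that when $x\in B_{x_0,R}$ the integrand $|x-y|^{-2}$ is singular at $y=x$; this singularity is integrable in $\mathbf{R}^3$, and a symmetric-decreasing rearrangement argument shows that, among all balls of radius $R$, the integral of the radially decreasing function $y\mapsto|x-y|^{-2}$ is largest over the ball centered at the singularity $x$. Translating this worst case to the origin yields
\[
\int_{B_{x_0,R}} |G_k(x-y)|^2\,\mathrm{d}y \;\le\; \frac{1}{(4\pi)^2}\int_{B_{0,R}} \frac{1}{|z|^2}\,\mathrm{d}z \;=\; \frac{1}{(4\pi)^2}\int_0^R \frac{4\pi\rho^2}{\rho^2}\,\mathrm{d}\rho \;=\; \frac{R}{4\pi},
\]
so that $\|G_k(x-\cdot)\|_{L^2(B_{x_0,R})} \le \sqrt{R/(4\pi)}$ for all $x$.

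Combining the uniform kernel bound with the data estimate gives
\[
\|u_k^N - u_k\|_{L^\infty(\mathbf{R}^3)} \;\le\; \sqrt{\frac{R}{4\pi}}\cdot\frac{R}{N}\|f\|_{H^1(B_{x_0,R})} \;=\; \frac{R^{3/2}}{\sqrt{4\pi}\,N}\|f\|_{H^1(B_{x_0,R})},
\]
which is the asserted estimate. I expect the main obstacle to be precisely the uniform-in-$x$ kernel bound: everything else (linearity, Cauchy--Schwarz, invoking \eqref{approxestimate}) is routine, whereas controlling the integral of the singular kernel uniformly over all positions of the observation point $x$, including those inside $B_{x_0,R}$, is the one step that genuinely requires an argument, here supplied by the rearrangement comparison with the centered ball.
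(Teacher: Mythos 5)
Your proposal is correct and follows essentially the same route as the paper: the convolution representation of $u_k^N-u_k$, Cauchy--Schwarz, the approximation estimate \eqref{approxestimate}, and the uniform bound $\int_{B_{x_0,R}}|G_k(x-y)|^2\,\mathrm{d}y \le \frac{1}{(4\pi)^2}\int_{B_{0,R}}|z|^{-2}\,\mathrm{d}z = \frac{R}{4\pi}$. The only difference is that you explicitly justify the worst-case comparison with the centered ball via rearrangement, a step the paper states without comment.
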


More or less similar results can be obtained for the Schrödinger and wave equation as well; from the conservation of probability (cf. \cite{craig2018course} p. 154) we immediately have that 
\begin{equation}
    \|u_N(\cdot,t) - u(\cdot,t)\|_{L^2(\mathbf{R}^3)} = \|f^N - f\|_{L^2(\mathbf{R}^3)} \leq \frac{R}{N}\|f\|_{H^1(B_{x_0,R})},
    \label{conservation}
\end{equation}
where $u_N$ and $u$ are solutions to \eqref{schro} with data $f_N$ and $f$, respectively. For $t > 0$, a pointwise estimate is given by 
\begin{align*}
    \|u_N(\cdot,t) - u(\cdot,t) \|_{L^\infty(\mathbf{R}^3)} &\leq \sup_{x\in \mathbf{R}^3} \int_{B_{x_0,R}}|G(x -y)(f_N(y) -f(y))|\mathrm{d}y \\
    &\leq \sup_{x \in \mathbf{R}^3} \|G(x-\cdot)\|_{L^2(B_{x_0,R})}\|f_N - f\|_{L^2(B_{x_0,R})} \\
    & \leq \left(\frac{m^3}{6\pi^2 (\hbar t)^3}\right)\frac{ R^4}{N}\|f\|_{H^1(B_{x_0,R})}.
\end{align*}
Above and below, the approximations of $f_N$ and $u_N$ are constructed as in equations \eqref{approx} and \eqref{un}, but with solutions from Proposition \ref{helm} replaced by solutions from Propositions \ref{schro} and \ref{wave}. 
However, one should note that $\|f\|_{L^2} = 1 $ does not necessarily imply $\|f_N\|_{L^2} = 1 $, and hence $|u_N(x)|^2$ may not sum to one. Still, estimate \eqref{conservation} shows that by increasing $N$ we can make $u_N$ arbitrarily close to $u$ in the $L^2$-norm.   
\\

For the wave equation we can use $L^p$-estimates for Fourier integral operators (cf. \cite{sogge1993p}, Eq. 6) to conclude that  
\begin{align*} 
\|u_N(\cdot,t) - u(\cdot,t)\|_{L^2(\mathbf{R}^3)} &\leq C_T\left(\|f_N -f\|_{L^2(\mathbf{R}^3)} + \|g_N -g\|_{H^{-1}(\mathbf{R}^3)} \right) \\
&\leq C_T\left(\|f_N -f\|_{L^2(\mathbf{R}^3)} + \|g_N -g\|_{L^2(\mathbf{R}^3)}\right) \\
&\leq C_T\frac{R}{N}\left(\|f\|_{H^1(B_{x_0,R})} + \|g\|_{H^1(B_{x_0,R})}\right), \quad 0 < t < T <\infty.   
\end{align*}
Above, $u$ and $u_N$ are solutions to \eqref{waveeq} with radial initial data $(f,g)$ in $H^1(B_{x_0,R})$ and $(f_N,g_N)$, respectively, and the constant $C_T$ depends on the maximum time $T$. We summarize the results.  

\begin{proposition}
For $N \in \mathbf{N}$, let $f_N$ and $g_N$ be the piecewise constant approximations to radial functions $f,g \in H^1(B_{x_0,R})$. Let $v$ and $v_N$ be solutions to the Schrödinger equation \eqref{schrödinger} with initial data $f$ and $f_N$, respectively. Then $v_N$ satisfy the estimate

$$  \|v_N(\cdot,t) - v(\cdot,t) \|_{L^\infty(\mathbf{R}^3)} \leq  \left(\frac{m^3}{6\pi^2 (\hbar t)^3}\right)\frac{ R^4}{N}\|f\|_{H^1(B_{x_0,R})}.$$

Let $w$ and $w_N$ be solutions to the wave equation \eqref{waveeq} with initial data ${(w(x,0),\partial_t w(x,0)) = (f,g)}$ and ${(w_N(x,0),\partial_t w_N(x,0)) = (f_N,g_N)}$, respectively. Then $w_N$ satisfy the estimate 
$$\|w_N(\cdot,t) - w(\cdot,t)\|_{L^2(\mathbf{R}^3)} \leq C_T\frac{R}{N}\left(\|f\|_{H^1(B_{x_0,R})} + \|g\|_{H^1(B_{x_0,R})}\right), \quad 0 < t < T <\infty. $$

\end{proposition}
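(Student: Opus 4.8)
The plan is to derive both estimates from three ingredients: the linearity of each equation, which reduces the error $v_N-v$ (respectively $w_N-w$) to the solution generated by the data difference $f_N-f$ (respectively $(f_N-f,g_N-g)$); a quantitative continuity estimate for the corresponding solution operator; and the piecewise-constant approximation bound \eqref{approxestimate}, applied to $f$ and, for the wave case, also to $g$. Because each $f_N$ is a finite linear combination of characteristic functions of concentric balls, the construction \eqref{un} (with the Helmholtz solutions replaced by those of Propositions \ref{schro} and \ref{wave}) guarantees that $v_N$ and $w_N$ are genuine solutions, so the differences below solve the respective Cauchy problems with the data differences as initial data.

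For the Schrödinger estimate I would first write, by linearity,
\[
v_N(x,t)-v(x,t)=\int_{B_{x_0,R}}G(x-y,t)\bigl(f_N(y)-f(y)\bigr)\,\mathrm{d}y,
\]
with $G$ the fundamental solution from the proof of Proposition \ref{schro}. The decisive observation is that $|G(x,t)|=\bigl(m/2\pi\hbar t\bigr)^{3/2}$ is independent of $x$, since both $\mathrm{e}^{-i\pi/2}$ and the oscillatory factor $\mathrm{e}^{im|x|^2/2\hbar t}$ have modulus one. Applying the Cauchy--Schwarz inequality pointwise in $x$, the weight $\sup_{x}\|G(x-\cdot,t)\|_{L^2(B_{x_0,R})}$ therefore collapses to $|G|$ times $\mu(B_{x_0,R})^{1/2}$, an explicit, $x$-independent quantity obtained by multiplying $|G|^2$ by the volume $\tfrac{4}{3}\pi R^3$ and taking a square root. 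Combining this with \eqref{approxestimate} gives the claimed $L^\infty$-bound for $v_N-v$, the constant emerging from the factor $(m/2\pi\hbar t)^{3}$ and the ball volume $\tfrac{4}{3}\pi R^3$.

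For the wave estimate, linearity shows that $w_N-w$ solves \eqref{waveeq} with data $(f_N-f,\,g_N-g)$. I would then invoke the $L^2$ stability of the wave propagator---the $p=2$ case of the $L^p$ bounds for Fourier integral operators (cf. \cite{sogge1993p})---which provides a constant $C_T$, uniform on $0<t<T$, such that
\[
\|w_N(\cdot,t)-w(\cdot,t)\|_{L^2(\mathbf{R}^3)}\le C_T\Bigl(\|f_N-f\|_{L^2(\mathbf{R}^3)}+\|g_N-g\|_{H^{-1}(\mathbf{R}^3)}\Bigr).
\]
Bounding $\|g_N-g\|_{H^{-1}}\le\|g_N-g\|_{L^2}$ via the embedding $L^2\hookrightarrow H^{-1}$, and then applying \eqref{approxestimate} to both $f$ and $g$, produces the stated estimate with the factor $R/N$.

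The main obstacle is not the algebra but securing the two continuity estimates in the precise forms needed. For Schrödinger the real content is the constancy of $|G|$, which makes the $L^2$-weight uniform in $x$ and hence upgrades the bound to a global $L^\infty$-estimate; once that is seen the remainder is a volume computation. For the wave equation the genuine input is the mapping property $L^2\times H^{-1}\to L^2$ of the solution operator with a time-uniform constant on finite intervals, which is where the Fourier integral operator estimate enters; checking that the cited bound applies at the Sobolev index $s=0$ and that $C_T$ stays finite for $0<t<T$ is the step that requires care.
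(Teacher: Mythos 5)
Your argument is essentially identical to the paper's: Cauchy--Schwarz against the constant-modulus fundamental solution for the Schrödinger part, and the $L^2\times H^{-1}\to L^2$ Fourier-integral-operator bound together with the embedding $L^2\hookrightarrow H^{-1}$ for the wave part, both combined with \eqref{approxestimate}. One small remark: carried out exactly as you describe (taking the square root of $|G|^2\cdot\tfrac{4}{3}\pi R^3$), the Schrödinger constant comes out as $\bigl(m^3/(6\pi^2(\hbar t)^3)\bigr)^{1/2}R^{5/2}/N$ rather than the stated $\bigl(m^3/(6\pi^2(\hbar t)^3)\bigr)R^{4}/N$ --- a discrepancy that is already present in the paper's own derivation, so it does not reflect a gap in your approach.
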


\section{Discussion}
Due to the linearity of the above equations, all results can be extended to obtain solutions when the data is any finite linear combination of characteristic functions on balls. As seen in Section \ref{approxx}, this includes characteristic functions on spherical shells, but any function that can be described by a sum $g = \sum w_i\chi_{B_{x_i,r_i}}$ will have a similar solution. Since many shapes in nature are spherical, this should have interesting applications. Moreover, it can possibly be used for approximation of more complicated functions than radial ones. 
Last, the method used to find solutions in this paper can, in principle, be generalized to any PDE with spherically symmetric fundamental solutions. However, the explicit and simple form of the surface measure on spherical caps that makes the calculation work out is, as far as we know, only available in $\mathbf{R}^3$. 
\vspace{5cm}
\subsection*{}
\emph{The author was partially funded by the Research Council of Norway project number 301538.}

\clearpage
\section*{Bibliography}
\bibliographystyle{plain}
\bibliography{main}
\end{document}